\documentclass[reqno,10pt]{amsart}
\usepackage{amsfonts}

\usepackage{latexsym,amssymb,amsthm,amsmath,amscd}
\usepackage{latexsym,amssymb,amsthm,amsmath}
\usepackage{graphics,amscd}
\usepackage{amssymb}
\usepackage{eucal}
\usepackage{amsmath}

\setcounter{MaxMatrixCols}{10}

\theoremstyle{plain}
\newtheorem{theorem}{Theorem}[section]
\newtheorem*{Theorem B}{Theorem B}
\newtheorem*{Theorem A}{Theorem A}

\newtheorem{lemma}{Lemma}[section]

\newtheorem{corollary}{Corollary}[section]

\newtheorem{definition}{Definition}[section]

\numberwithin{equation}{section}

\theoremstyle{remark}
\newtheorem{remark}{Remark}[section]
 \numberwithin{equation}{section}

\def\<{\left < }
\def\>{\right >}
\def\({\left ( }
\def\){\right )}

\def\e{\eqref}
\def\p{\partial }
\def\x{{\bf x}}

\begin{document}
\title[Yamabe and quasi-Yamabe solitons]{Yamabe and quasi-Yamabe solitons on Euclidean submanifolds}
\author[B.-Y. Chen]{Bang-Yen Chen}
\address{Department of Mathematics, Michigan State University, 619 Red Cedar
Road, East Lansing, Michigan 48824--1027, U.S.A.}
\email{chenb@msu.edu}
\author[S. Deshmukh]{Sharief Deshmukh}
\address{Department of Mathematics, College of science, King Saud University
P.O. Box-2455 Riyadh-11451, Saudi Arabia}
\email{shariefd@ksu.edu.sa}

\begin{abstract} 
In this paper we initiate the study of Yamabe and quasi-Yamabe solitons on Euclidean submanifolds whose soliton  fields are  the tangential components of their position vector fields. Several fundamental results of such solitons were proved. In particular, we  classify such Yamabe and quasi-Yamabe solitons on Euclidean hypersurfaces. 

\end{abstract}

\keywords{Yamabe soliton, quasi-Yamabe soliton, Euclidean hypersurface, Euclidean submanifolds, position vector field, torse-forming vector field}

 \subjclass[2000]{53C25, 53C40}
\maketitle

\section{Introduction}

 The Yamabe flow was
introduced by R. Hamilton at the same time as the Ricci flow (cf. \cite{H98}). It deforms a given manifold
by evolving its metric according to
\begin{align}\label{1.1} \frac{\p}{\p t}g(t)=-R(t) g(t),\end{align}
where $R(t)$ denotes the scalar curvature of the metric $g(t)$.
Yamabe solitons correspond to self-similar solutions of the Yamabe flow. 

 In dimension $n = 2$ the Yamabe
flow is equivalent to the Ricci flow (defined by $\frac{\p}{\p t}g(t)=-2 \rho(t)$, where $\rho$ stands for the
Ricci tensor). However in dimension $n>2$ the Yamabe and Ricci flows do not agree, since
the first one preserves the conformal class of the metric but the Ricci flow does not in general.

A Riemannian manifold $(M, g)$ is a {\it Yamabe soliton} if it admits a vector field $X$ such that
\begin{equation}\label{1.2}\frac{1}{2}{\mathcal L}_{X}g= (R-\lambda) g,\end{equation} 
where ${\mathcal L}_{X}$ denotes the Lie derivative in the direction of the vector field $X$ and $\lambda$ is a real number. Moreover, a vector field $X$ as in the definition is called a {\it soliton field} for
$(M, g)$. In the following, we denote the Yamabe soliton satisfying \e{1.2} by $(M,g,X,\lambda)$.
 A Yamabe soliton is said to be {\it shrinking, steady or expanding} if
it admits a soliton field for which, respectively, $\lambda>0,\, \lambda=0$ or $\lambda<0$. 
 
We call a Riemannian manifold $(M,g)$ a {\it quasi-Yamabe soliton} if  it admits a vector field $X$ such that
\begin{equation}\label{1.3}\frac{1}{2}{\mathcal L}_{X}g= (R-\lambda) g+ \mu X^\#\otimes X^\#,\end{equation} 
for some constant $\lambda$ and some function $\mu$, where $X^\#$ is the dual 1-form of $X$. The vector field $X$  is also called a {\it soliton field} for the quasi-Yamabe soliton. We denote the quasi-Yamabe soliton satisfying \e{1.3} by $(M,g,X,\lambda,\mu)$.

When $X=\nabla f$ is a gradient field. then \e{1.3} becomes
\begin{equation}\label{1.4}\nabla^2f= (R-\lambda) g+ \mu df\otimes df,\end{equation} 
which is nothing but a generalized quasi-Yamabe gradient soliton (see \cite{GH14,LP16}), where $\nabla^2f$ denotes the Hessian of $f$.

For a submanifold $M$ of a Euclidean $m$-space $\mathbb E^m$, the most natural tangent vector field of $M$ is the tangential component of the position vector field $\x$ of $M$ (cf. for instance \cite{C17a,C17b}). Ricci solitons on Euclidean submanifolds arisen from such a vector field have been studied recently by the authors in \cite{CD2,CD3}. 

In this paper we initiate the study of Yamabe and quasi-Yamabe solitons on Euclidean submanifolds whose soliton fields are the tangential components of their position vector fields.  Several fundamental results of such solitons were proved. 
In particular, we classify Yamabe and quasi-Yamabe solitons on Euclidean hypersurfaces whose potential fields are the tangential component of their position vector fields.

\section{Basic definitions and formulas}

For general references on Riemannian submanifolds, we refer to \cite{book73,book11,book15}.

 Let  $(M,g)$ be an $n$-dimensional Riemannian manifold. For an orthonormal basis $e_1,\ldots,e_n$ of a tangent space $T_pM$ at $p\in M$.
  Denote the sectional curvature of a plane section spanned by  $e_i$ and $e_j$ $(i\ne j)$ by $K_{ij}$. Then the scalar curvature $R$ of $M$ is given by
  \begin{align}\label{2.1} R=\sum_{1\leq i\ne j\leq n} K_{ij}. \end{align}
 
 Let $\phi:(M,g) \to \mathbb E^m$ an isometric immersion of a Riemannian $n$-manifold $(M,g)$ into a Euclidean $m$-space $(\mathbb E^m,\tilde g)$.      
Denote by $\nabla$ and $\tilde\nabla$ the Levi-Civita connections on $(M,g)$ and $(\mathbb E^m,\tilde g)$, respectively. 

For vector fields $X,Y$ tangent to $M$ and $\eta$ normal to $M$, the formula of Gauss and the formula of Weingarten are given respectively by \begin{align} &\label{2.2}\tilde \nabla_XY=\nabla_XY+h(X,Y), \;\;
\\& \label{2.3}\tilde \nabla_X \eta=-A_\eta X+D_X\eta,\end{align} 
where $\nabla_X Y$ and $h(X,Y)$ are the tangential and the normal components of $\tilde\nabla_X Y$. Similarly,  $-A_\eta X$  and  $D_X\eta$ are the tangential and normal components of  $\tilde \nabla_X \eta$. These two formulas define the second
fundamental form $h$, the shape operator $A$, and the normal connection $D$ of $M$ in the ambient space $\mathbb E^m$. 
 
  It is well-known that each  $A_{\eta}$ is a self-adjoint endomorphism. The shape operator $A$ and the second fundamental form $h$ are related by
 \begin{align} &\label{2.4}\tilde g(h(X,Y),\eta)=g(A_{\eta}X,Y).\end{align}
  
  The {\it mean curvature vector}  $H$ of $M$ in $\mathbb E^m$ is defined by \begin{align}\label{2.5} H=\(\frac{1}{n}\){\rm trace}\, h.\end{align}
A submanifold $M$ is called {\it minimal} if its mean curvature vector field vanishes identically. It is called {\it totally umbilical} if the second fundamental form satisfies 
$ h(X,Y)=g(X,Y)H$ for tangent vectors $X,Y$. 
A hypersurface of a Euclidean $(n+1)$-space $\mathbb E^{n+1}$ is called a {\it quasi-umbilical hypersurface} if its shape operator has an eigenvalue $\kappa$ of multiplicity $mult(\kappa)\geq n-1$ (cf. \cite[page 147]{book73}). On the subset $U$ of $M$ on which $mult(\kappa)=n-1$, an eigenvector with eigenvalue of multiplicity one is called a {\it distinguished direction} of the quasi-umbilical hypersurface.

The equations of Gauss is given by
\begin{align} &\label{2.6} g(R(X,Y)Z,W) =  \tilde g(h(X,W),h(Y,Z)) - \tilde g(h(X,Z),h(Y,W))\end{align}
for vectors $X,Y,Z,W$  tangent to $M$

For a function $f$ on $M$, we denote by $\nabla f$ and $H^f$ the gradient of $f$ and the Hessian of $f$, respectively. Thus we have
\begin{align}\label{2.7} & g(\nabla f,X)=Xf,
\\&\label{2.8} H^f(X,Y)=XYf-(\nabla_X Y)f.\end{align}

Let $(M,g)$ be a Riemannian $m$-manifold. Associated with the Ricci tensor $Ric$, define a $(1,1)$-tensor $Q$ by $$g(Q(X),Y)=Ric(X,Y).$$
The {\it Weyl conformal curvature tensor} $C$ is a tensor field of type $(1,3)$ defined by
\begin{equation}\begin{aligned}\notag & C(X,Y)Z=R(X,Y)Z+\text{\small$\frac{1}{m}$}\{Ric(X,Z)Y-Ric(Y,Z)X\\&\hskip.1in +\<X,Z\>QY -\<Y,Z\>QX\}-\text{\small$\frac{2\tau}{m(m+1)}$}\{\<X,Z\>Y-\<Y,Z\>X\}.\end{aligned}\end{equation}

A well-known result of H. Weyl \cite{W18} states that a Riemannian manifold $M$ of dimension $\geq 4$ is conformally flat if and only if the conformal curvature tensor $C$ vanishes identically.

\section{Euclidean Submanifolds as Yamabe solitons}

For an isometric immersion $\phi:(M,g)\to \mathbb E^{m}$ of a Riemannian $n$-manifold $(M,g)$ into a Euclidean $m$-space $\mathbb E^{m}$, we denote by $\x^T$ and $\x^N$ the tangential and normal components of the position vector field $\x$ of $M$ in $\mathbb E^{m}$, respectively. 
So, we have 
\begin{align} \label{3.1} \x=\x^T+\x^N.\end{align}

\begin{theorem}\label{T:3.1} A  Euclidean submanifold $(M,g)$ of $\mathbb E^{m}$ is a Yamabe soliton with $\x^T$ as its soliton field if and only if the second fundamental form $h$ of $M$ satisfies
\begin{align} \label{3.2}\tilde g(h(V,W),\x^N)=(R-\lambda-1)g(V,W)\end{align}
for vectors $V,W$ tangent to $M$, where $R$ is the scalar curvature of $M$ and $\lambda$ is a constant.
\end{theorem}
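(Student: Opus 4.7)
The plan is to unravel the Yamabe soliton equation \eqref{1.2} with $X = \mathbf{x}^T$ by computing $\mathcal{L}_{\mathbf{x}^T}g$ explicitly in terms of submanifold quantities. Recall that
$$(\mathcal{L}_{\mathbf{x}^T}g)(V,W) = g(\nabla_V \mathbf{x}^T, W) + g(\nabla_W \mathbf{x}^T, V),$$
so the whole task reduces to finding a clean expression for the induced covariant derivative $\nabla_V \mathbf{x}^T$.

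The key ingredient is the elementary identity $\tilde\nabla_V \mathbf{x} = V$ for the Euclidean position vector field, which I would state once at the outset. Decomposing $\mathbf{x} = \mathbf{x}^T + \mathbf{x}^N$ as in \eqref{3.1} and applying the Gauss formula \eqref{2.2} to $\mathbf{x}^T$ and the Weingarten formula \eqref{2.3} to $\mathbf{x}^N$ yields
$$V = \tilde\nabla_V \mathbf{x} = \nabla_V \mathbf{x}^T + h(V,\mathbf{x}^T) - A_{\mathbf{x}^N}V + D_V \mathbf{x}^N.$$
Comparing tangential components gives $\nabla_V \mathbf{x}^T = V + A_{\mathbf{x}^N}V$, which is the step doing all the work.

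Plugging this into the Lie derivative formula and using the self-adjointness of $A_{\mathbf{x}^N}$ together with \eqref{2.4} produces
$$\tfrac{1}{2}(\mathcal{L}_{\mathbf{x}^T}g)(V,W) = g(V,W) + \tilde g(h(V,W),\mathbf{x}^N).$$
Setting this equal to $(R-\lambda)g(V,W)$ and rearranging yields \eqref{3.2}, and the converse is immediate by reversing the chain of equivalences.

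There is no real obstacle here, since every step is an equivalence; the proof is essentially a one-line computation once the identity $\tilde\nabla_V \mathbf{x} = V$ is invoked. The only mild care needed is to keep the tangential/normal split straight when applying \eqref{2.2} and \eqref{2.3} to the two different pieces of $\mathbf{x}$, and to remember that $\nabla_V \mathbf{x}^T$ includes the shape-operator correction $A_{\mathbf{x}^N}V$ coming from the normal part, not just the trivial $V$ term one would naively expect.
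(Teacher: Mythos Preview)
Your proposal is correct and follows essentially the same route as the paper's own proof: invoke $\tilde\nabla_V\mathbf{x}=V$, split $\mathbf{x}=\mathbf{x}^T+\mathbf{x}^N$ via Gauss--Weingarten to obtain $\nabla_V\mathbf{x}^T=V+A_{\mathbf{x}^N}V$, plug into the Lie-derivative formula, and compare with \eqref{1.2}. The paper also records the normal component $h(V,\mathbf{x}^T)=-D_V\mathbf{x}^N$, but this is not used for the present theorem, so your omission of it is harmless.
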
 
\begin{proof} Let $\phi:(M,g)\to \mathbb E^{m}$ denote the isometric immersion. It is well-known that the position vector field $\x$ of $M$ in $\mathbb E^{m}$ is a concurrent vector field, i.e., $\x$ satisfies
\begin{align} \label{3.3} \tilde\nabla_V\x=V,\end{align}
for any vector $V$ tangent to $M$.

It follows from \e{3.1}, \e{3.3} and formulas of Gauss and Weingarten that
\begin{equation}\begin{aligned} \label{3.4} V&=\tilde \nabla_V \x^T+\tilde \nabla_V \x^N=
 \nabla_V \x^T+h(V,\x^T)-A_{\x^N}V+D_V \x^N
\end{aligned}\end{equation} for any $V$ tangent to $M$.
By comparing the tangential and normal components from \e{3.4} we find
\begin{align} \label{3.5}& \nabla_V \x^T=A_{\x^N}V+V,
\\&\label{3.6} h(V, \x^T)=-D_V \x^N.\end{align}
From the definition of Lie derivative and \e{3.5} we obtain
\begin{equation}\begin{aligned} \label{3.7}({\mathcal L}_{\x^T}g)(V,W)&=g(\nabla_V \x^T,W)+g(\nabla_W \x^T,V) \\&=2g(V,W)+2 g(A_{\x^N}V,W)
\\&=2g(V,W)+2\tilde g(h(V,W),\x^N)
\end{aligned}\end{equation} for $V,W$ tangent to $M$. Consequently,  by applying \e{1.2} and \e{3.6}, we conclude that $(M,g)$ is a Ricci soliton with $\x^T$ as its soliton field if and only if \e{3.2} holds identically for some constant $\lambda$.
\end{proof}

An important application of Theorem \ref{T:3.1} is the following.

\begin{theorem}\label{T:3.2} Let $(M,g)$ be a Riemannian manifold. Then an isometric immersion $\phi: (M,g)\to S^{m-1}_o(r)\subset \mathbb E^m$ of $M$ into the hypersphere $S^{m-1}_o(r)$ of radius $r$ with center at the origin $o$ is a Yamabe soliton with $\x^T$ as its soliton field if and only if $(M,g)$ has constant scalar curvature $R$.
\end{theorem}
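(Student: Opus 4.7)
The plan is to exploit the elementary geometric observation that for any submanifold of the sphere $S^{m-1}_o(r)$ centered at the origin, the position vector is everywhere normal to the submanifold. This collapses the Yamabe soliton equation to a pointwise condition on the scalar curvature, so both implications become essentially trivial.

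First I would observe that since $\phi(M)\subset S^{m-1}_o(r)$, the function $\tilde g(\x,\x)$ equals the constant $r^2$ along $M$. Differentiating in a tangent direction $V$ and using $\tilde\nabla_V\x=V$ from \eqref{3.3} yields $\tilde g(V,\x)=0$ for every $V$ tangent to $M$. Hence $\x^T\equiv 0$ and $\x^N\equiv \x$ on $M$.

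With $\x^T=0$ the Lie derivative ${\mathcal L}_{\x^T}g$ vanishes identically, and the Yamabe soliton equation \eqref{1.2} reduces to $(R-\lambda)g=0$, i.e.\ $R\equiv \lambda$. Since $\lambda$ is by definition a constant, this says precisely that $R$ is constant on $M$. Conversely, if $R$ is constant, setting $\lambda=R$ makes both sides of \eqref{1.2} vanish and so $(M,g,\x^T,\lambda)$ is a Yamabe soliton.

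As a cross-check one can instead feed the data into Theorem \ref{T:3.1}. Decomposing the second fundamental form as $h=h'-\tfrac{1}{r^2}g(\cdot,\cdot)\x$, where $h'$ is the second fundamental form of $M$ inside $S^{m-1}_o(r)$ and is therefore $\tilde g$-perpendicular to $\x$, one obtains $\tilde g(h(V,W),\x^N)=-g(V,W)$, so \eqref{3.2} collapses to $R-\lambda-1=-1$, recovering $R\equiv \lambda$. There is no serious obstacle here: the entire argument pivots on the single observation that the position vector is normal to $M$, and the two implications follow by plugging this into the soliton equation in either form.
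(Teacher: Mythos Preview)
Your argument is correct and essentially matches the paper's proof: both rest on the observation that $\x^T\equiv 0$ for a submanifold of $S^{m-1}_o(r)$, and your cross-check via Theorem \ref{T:3.1} and the decomposition $h=h'-\tfrac{1}{r^2}g(\cdot,\cdot)\x$ is exactly the route the paper takes. Your primary argument, plugging $\x^T=0$ directly into \eqref{1.2} so that ${\mathcal L}_{\x^T}g=0$, is a shade more elementary since it bypasses Theorem \ref{T:3.1} altogether, but the content is the same.
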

\begin{proof}  Let $(M,g)$ be a Riemannian manifold. If $\phi: (M,g)\to S^{m-1}_o(r)\subset \mathbb E^m$ is an isometric immersion  of $M$ into $S^{m-1}_o(r)$, then we have $\x=\x^N$.
Also in this case it follows from \cite[Lemma 3.5, page 60]{book11} that the second fundamental form of $M$ in $\mathbb E^m$ satisfies \begin{align}\label{3.8} h(V,W)=h'(V,W)-\frac{g(V,W)}{r^2}\x\end{align}
for vectors $V,W$ tangent to $M$, where $h'$ denotes the second fundamental form of $M$ in $S^{m-1}_o(r)$.
Clearly, \e{3.8} implies
\begin{align}\label{3.9} \tilde g(h(V,W),\x^N)=-g(V,W).\end{align}
Thus condition \e{3.2} in Theorem \ref{T:3.1} holds if and only if  $R=\lambda$ holds. Therefore $(M,g,\x^T,\lambda)$ with $\lambda=R$ is a Yamabe soliton if and only if $(M,g)$ has constant scalar curvature $R$.
\end{proof}

\begin{remark} Theorem \ref{T:3.2} implies that there exist ample examples of Yamabe solitons with $\x^T$ as the soliton fields.
\end{remark}

The next result classifies Yamabe solitons on Euclidean hypersurfaces with $\x^T$ as the soliton fields.

\begin{corollary}\label{C:3.1} Let $(M,g)$ be a Euclidean hypersurface of $\mathbb E^{n+1}$. Then $(M,g,\x^T,\lambda)$ is a Yamabe soliton if and only if either
\begin{itemize}
\item[{\rm (1)}] $\lambda=-1$ and $M$ is an open part of a hyperplane of $\mathbb E^{n+1}$, or
    
\item[{\rm (2)}] $\lambda=R>0$ and $M$ is an open part of a hypersphere of $\mathbb E^{n+1}$ centered the origin.\end{itemize}
\end{corollary}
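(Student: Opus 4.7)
The plan is to restrict Theorem~\ref{T:3.1} to codimension one, turn it into a pointwise equation for the shape operator, and then apply the classical classification of umbilical hypersurfaces. I would pick a local unit normal $\xi$ on $M$ and set $\rho:=\tilde g(\x,\xi)$, so that $\x^{N}=\rho\,\xi$. Using the hypersurface identity $h(V,W)=g(AV,W)\,\xi$, where $A$ is the shape operator, condition \e{3.2} of Theorem~\ref{T:3.1} is equivalent to the endomorphism equation
\begin{equation*}
\rho\, A=(R-\lambda-1)\,\mathrm{Id}_{TM}.
\end{equation*}
Wherever $\rho\ne 0$ this immediately gives $A=\frac{R-\lambda-1}{\rho}\,\mathrm{Id}$, so $M$ is totally umbilical there.

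Granting umbilicity on all of $M$ (see the obstacle below), the classical classification \cite[p.~147]{book73} identifies $M$ as an open part of a hyperplane or of a round hypersphere of $\mathbb E^{n+1}$, and it then remains only to test each candidate against the displayed identity. For a hyperplane both $A$ and $R$ vanish, so the equation collapses to $0=-(\lambda+1)\,\mathrm{Id}$ and forces $\lambda=-1$, i.e.\ alternative~(1). For a hypersphere of radius $r$ centred at $c$, taking the outward normal $\xi=(\x-c)/r$ yields $A=-r^{-1}\mathrm{Id}$, $R=n(n-1)/r^{2}$ and $\rho=r^{-1}(|\x|^{2}-\x\cdot c)$, so the equation reads $-\rho/r=R-\lambda-1$; constancy of the right-hand side forces $\rho$ to be constant on $M$, which a short calculation shows can only happen when $c=0$, after which $\rho=r$ and $\lambda=R>0$, giving alternative~(2). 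The converses are immediate since both configurations clearly satisfy $\rho A=(R-\lambda-1)\,\mathrm{Id}$.

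The main obstacle I anticipate is the umbilicity step, since the displayed equation imposes no direct constraint on $A$ at points where $\rho=0$, i.e.\ where $\x$ is tangent to $M$. My plan is to extend umbilicity from the open set $\{\rho\ne 0\}$ to its closure by continuity of $A$, and to deal separately with any open piece on which $\x^{N}\equiv 0$ using \e{3.5} (which shows that $\x^{T}$ is a concurrent vector field on such a piece) together with the forced relation $R=\lambda+1$ coming from the displayed equation, so as to reduce that case to the hyperplane alternative~(1).
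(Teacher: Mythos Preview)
Your approach is essentially the same as the paper's: both invoke Theorem~\ref{T:3.1}, deduce that $M$ is umbilical wherever $\x^{N}\ne 0$, apply the classical classification of totally umbilical hypersurfaces, and then rule out hyperspheres not centered at the origin by showing that $|\x^{N}|$ (equivalently, your $\rho$) would have to be constant. The paper organizes the argument as a trichotomy on $\x^{N}$ (identically zero, identically $\x$, or neither) rather than via your endomorphism equation $\rho\,A=(R-\lambda-1)\,\mathrm{Id}$, but the substance---including the brief handling of the degenerate case $\x^{N}\equiv 0$ that you flag as an obstacle---is the same.
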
 
\begin{proof} Let $M$ be a Euclidean hypersurface of $\mathbb E^{n+1}$. Assume that $(M,g,\x^T,\lambda)$ is a Yamabe soliton with $\x^T$ as its soliton field. Then we have \e{3.2} by Theorem \ref{T:3.1}.

\vskip.1in
{\it Case} (a): $\x^N\equiv 0$. In this case, the position vector field $\x$ is always tangent to the hypersurface $M$. Thus $M$ is an open portion of a hyperplane containing the origin of $\mathbb E^{n+1}$. Hence, by equation \e{2.6} of Gauss, $M$ is a flat space immersed as a totally geodesic hypersurface in $\mathbb E^{n+1}$. Therefore it follows from \e{3.2} that $\lambda=-1$. Consequently, the Yamabe soliton $(M,g)$ is an expanding one. This gives Case (1) of the corollary.

\vskip.1in
{\it Case} (b): $\x^N\equiv \x$. In this case, the position vector field $\x$ is normal to the hypersurface $M$ everywhere. Thus $M$ is an open portion of a hypersphere of radius, say $r$, centered at the origin. So, in this case we find from (see, e.g. \cite[Lemma 3.5, page 60]{book11}) that
\begin{align} \label{3.10} h(V,W)=-\frac{g(V,W)}{r^2}\x\end{align}
for $V,W$ tangent to $M$. After substituting \e{3.10} into \e{3.2} we obtain $\lambda=R>0$. Consequently, the Yamabe soliton is shrinking. This gives Case (2)  of the corollary.

\vskip.1in
{\it Case} (c): $\x^N\ne 0,\x$. It follows from \e{3.2} that $M$ is totally umbilical in $\mathbb E^m$. Hence the scalar curvature $R$ is constant. Thus  \e{3.2} gives
\begin{align} \label{3.11}\tilde g(h(U,U),\x^N)=R-\lambda-1=constant\end{align}
for any unit vector $U$ tangent to $M$.

Now, suppose that $M$ is totally geodesic in $\mathbb E^{n+1}$. Then \e{3.11} reduce to $\lambda=-1$.
Hence we obtain Case (1) again.

First, let us assume that $M$ is totally umbilical, but not totally geodesic  in $\mathbb E^{n+1}$. Then  $M$  is contained in a hypersphere with radius, say $r$, centered at $\x_o\ne 0$. Thus we have (see, e.g. \cite[Lemma 3.5, page 60]{book11}):
\begin{align} \label{3.12} h(U,U)=-\frac{\x-\x_o}{r^2}\end{align}
for any unit vector $U$ tangent to $M$. Now, after substituting \e{3.12} into \e{3.2} we obtain
\begin{align} \notag \tilde g(\x_o-\x,\x^N)=constant\end{align}
It is easy to see that $\x-\x_o=rN$ and $\x^N=|\x^N| N$, where $N$ is a unit normal vector field of $M$. Therefore $|\x^N|$ is constant on $M$ which is impossible, since the center of the hypersurface is not  the origin of $\mathbb E^{n+1}$.
\end{proof}

A unit normal vector field $\xi$ of a Euclidean submanifold $M$ is called a {\it parallel} (resp., {\it nonparallel}\/) normal section if $D\xi=0$ (resp., $D\xi\ne 0$) everywhere on $M$ (cf. \cite{book73,CY71,CY73}). 

\begin{corollary}\label{C:3.2} Let $(M,g)$ be an $n$-dimensional submanifold of $\mathbb E^{n+2}$ with $n>3$ and $\x^N\ne 0$. Assume that $(M,g,\x^T,\lambda)$ is a Yamabe soliton. Then we have:
\begin{itemize}
\item[{\rm (1)}] If $\frac{\x^N}{|\x^N|}$ is a nonparallel normal section, then $(M,g)$ is a conformally flat space.  Moreover, in this case $M$ is the locus of $(n-1)$-spheres.  

\item[{\rm (2)}] If $\frac{\x^N}{|\x^N|}$ is a parallel normal section, then $(M,g)$ lies either a hyperplane or in a hypersphere of $\mathbb E^{n+2}$.
\end{itemize}
\end{corollary}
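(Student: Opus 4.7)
The plan is to apply Theorem~\ref{T:3.1} to the unit normal $\xi_1:=\x^N/|\x^N|$. Substituting $\x^N=|\x^N|\xi_1$ into \eqref{3.2} gives $A_{\xi_1}=\alpha I$ with $\alpha:=(R-\lambda-1)/|\x^N|$; i.e., $M$ is totally umbilical in the $\xi_1$-direction. Choose a unit normal $\xi_2\perp\xi_1$ and put $B:=A_{\xi_2}$, so that
\[
h(V,W)=\alpha\,g(V,W)\,\xi_1+g(BV,W)\,\xi_2.
\]
Because $A_{\xi_1}=\alpha I$ commutes with $B$, the Ricci equation shows the normal bundle of $M$ is flat.

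For part (2), assume $D\xi_1\equiv 0$. The Codazzi equation for $\xi_1$ reduces to $V(\alpha)W=W(\alpha)V$, which forces $\alpha$ to be a real constant. From $\tilde\nabla_V\xi_1=-\alpha V$ one then integrates directly: if $\alpha=0$, then $\xi_1$ is a fixed vector in $\mathbb E^{n+2}$ and $\tilde g(\x,\xi_1)$ is constant along $M$, so $M$ lies in a hyperplane; if $\alpha\ne0$, then $\tilde\nabla_V(\x+\alpha^{-1}\xi_1)=0$, so $\x+\alpha^{-1}\xi_1$ equals a fixed point $\x_0\in\mathbb E^{n+2}$ and $M$ lies in the hypersphere of radius $|\alpha|^{-1}$ about $\x_0$.

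For part (1), write $D_V\xi_1=\omega(V)\xi_2$ with $\omega\not\equiv0$, and let $U:=\omega^\sharp$. The Codazzi equation for $\xi_1$ now reads
\[
V(\alpha)W-W(\alpha)V=\omega(V)BW-\omega(W)BV\qquad\text{for all }V,W\in TM.
\]
Tracing this identity over $V$ against an orthonormal frame produces the contracted relation
\[
(n-1)\nabla\alpha=(\operatorname{tr}B)\,U-BU.
\]
Setting $W=U$ and $V\perp U$ in the Codazzi identity yields $BV=\beta V-|U|^{-2}g(A',V)U$, where $\beta:=g(\nabla\alpha,U)/|U|^2$ and $A':=\nabla\alpha-\beta U\perp U$; symmetry of $B$ then forces the $U^\perp$-component of $BU$ to equal $-A'$. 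Matching $U^\perp$-components in the contracted relation gives $(n-1)A'=A'$, so $A'=0$ because $n>3$. Consequently $B=\beta I$ on $U^\perp$ and $BU$ is a multiple of $U$; thus $B$ is quasi-umbilical, with eigenvalue $\beta$ of multiplicity $n-1$ and distinguished direction $U$.

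Summarizing, $M$ is quasi-umbilical in $\mathbb E^{n+2}$ with respect to both normals $\xi_1,\xi_2$ (sharing the common distinguished direction $U$) and has flat normal bundle. Inserting these shape operators into the Gauss equation, one verifies that the Weyl conformal curvature tensor of $M$ vanishes, so $M$ is conformally flat since $n>3$. Moreover the $(n-1)$-dimensional distribution $U^\perp$ is integrable via the Codazzi relations, and each of its leaves is a totally umbilical $(n-1)$-submanifold of $\mathbb E^{n+2}$, hence an open part of an $(n-1)$-sphere; this exhibits $M$ as a locus of $(n-1)$-spheres. The main obstacle is the derivation of quasi-umbilicity for $B$ in part (1): the crucial step is the cancellation $(n-1)A'=A'$, which pins $\nabla\alpha$ to the direction $U$ and in turn traps $B$ into quasi-umbilic form.
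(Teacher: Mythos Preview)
Your argument is correct and, unlike the paper, is essentially self-contained. In the paper the proof is two lines: from Theorem~\ref{T:3.1} one sees that $M$ is umbilical in the direction $\x^N$, and then the conclusions of (1) and (2) are obtained by quoting Theorems~3 and~4 of Chen--Yano \cite{CY73} (nonparallel umbilical normal section $\Rightarrow$ conformally flat, locus of $(n-1)$-spheres) and Theorem~3.3 of Chen--Yano \cite{CY71} (parallel umbilical normal section $\Rightarrow$ $M$ lies in a hyperplane or hypersphere). What you do instead is reprove those cited theorems in situ: for (2) you integrate $\tilde\nabla_V\xi_1=-\alpha V$ after using Codazzi to make $\alpha$ constant, and for (1) your trace-of-Codazzi computation pinning $\nabla\alpha$ to the line through $U$ (via the cancellation $(n-2)A'=0$) is precisely the mechanism behind \cite[Theorem~3]{CY73}, after which the Gauss equation kills the Weyl tensor. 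The paper's route is short because it outsources the work; yours exposes the actual geometry (the Codazzi identity forcing quasi-umbilicity of $B$). One point you pass over quickly: that the leaves of $U^\perp$ are totally umbilical in $\mathbb E^{n+2}$ requires, beyond integrability, showing $\nabla_V e$ is proportional to $V$ for $V\perp e=U/|U|$; this also drops out of the Codazzi equation for $\xi_2$ with $W=e$, but it is worth a line since it is where the ``locus of $(n-1)$-spheres'' claim is finally earned.
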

\begin{proof}  Let $(M,g)$ be an $n$-dimensional submanifold of $\mathbb E^{n+2}$ with $n>3$ and $\x^N\ne 0$. If $(M,g,\x^T,\lambda)$ is a Yamabe soliton, then it follows from Theorem \ref{T:3.1} that $M$ is umbilical with respect the normal direction $\x^N$.

\vskip.05in
{\it Case} (i): $\frac{\x^N}{|\x^N|}$ {\it is a nonparallel normal section}. It follows from  \cite[Theorem 3]{CY73} that $(M,g)$ is a conformally flat space. Moreover, from  \cite[Theorem 4]{CY73} we know that the hypersurface  is a locus of $(n-1)$-spheres in $\mathbb E^{n+1}$.

\vskip.05in
{\it Case} (ii):  $\frac{\x^N}{|\x^N|}$ {\it is a parallel normal section}. It follows from  \cite[Theorem 3.3]{CY71} that $M$ lies either in a hyperplane or in a hypersphere of $\mathbb E^{n+2}$.
\end{proof}

\section{Euclidean submanifolds as quasi-Yamabe solitons}

For quasi-Yamabe solitons we have the following.

\begin{theorem}\label{T:4.1} A  Euclidean submanifold $(M,g)$ of $\mathbb E^{m}$ is a quasi-Yamabe soliton with $\x^T$ as its soliton field if and only if the second fundamental form $h$ of $(M,g)$ satisfies
\begin{align} \label{4.1}\tilde g(h(V,W),\x^N)=(R-\lambda-1)g(V,W)+\mu g(\x^T,V)g(\x^T,W)\end{align}
for vectors $V,W$ tangent to $M$, where  $\lambda$ is a constant, $\mu$ is a function and $R$ is the scalar curvature of $M$.
\end{theorem}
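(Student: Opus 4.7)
The plan is to mimic the proof of Theorem~\ref{T:3.1}, since the left-hand side $\frac{1}{2}\mathcal L_{\x^T}g$ is the same object in both the Yamabe and quasi-Yamabe settings; only the right-hand side of the soliton equation changes by the extra term $\mu X^\# \otimes X^\#$. So I expect essentially no new geometric input, just a one-line bookkeeping adjustment.

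Concretely, I would first recall the identity already obtained in the proof of Theorem~\ref{T:3.1}, namely
\begin{equation*}
(\mathcal L_{\x^T} g)(V,W) \;=\; 2g(V,W) + 2\tilde g(h(V,W),\x^N),
\end{equation*}
which was derived from the fact that $\x$ is concurrent (\e{3.3}), the decomposition $\x=\x^T+\x^N$, and the formulas of Gauss and Weingarten applied to $\tilde\nabla_V \x$. Nothing in that derivation uses the Yamabe equation itself, so it is available verbatim here.

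Next, I would write out the defining equation \e{1.3} of a quasi-Yamabe soliton with soliton field $X=\x^T$. Since the dual $1$-form of $\x^T$ is $(\x^T)^\#(V)=g(\x^T,V)$, the tensor $(\x^T)^\# \otimes (\x^T)^\#$ evaluated on $(V,W)$ equals $g(\x^T,V)\,g(\x^T,W)$. Thus \e{1.3} reads
\begin{equation*}
g(V,W)+\tilde g(h(V,W),\x^N) \;=\; (R-\lambda)\,g(V,W)+\mu\, g(\x^T,V)\,g(\x^T,W),
\end{equation*}
and rearranging the $g(V,W)$ term onto the right yields exactly \e{4.1}. The converse direction is just reading these equalities backwards.

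There is no real obstacle: the only thing to be careful about is keeping the identification $X^\#=g(\x^T,\cdot)$ consistent and checking that the constant ``$-1$'' on the right of \e{4.1} comes from the $+2g(V,W)$ term in the Lie-derivative computation (which in turn comes from the concurrent property of $\x$). Because both sides of \e{1.3} are symmetric $(0,2)$-tensors, evaluating on arbitrary tangent vectors $V,W$ is enough, so no further symmetrization argument is needed.
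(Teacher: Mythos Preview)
Your proposal is correct and follows exactly the paper's own approach: the paper's proof of Theorem~\ref{T:4.1} is the one-line remark ``By applying \e{1.3} and \e{3.7}, this theorem can be proved in the same way as Theorem~\ref{T:3.1},'' and you have simply spelled out those details accurately.
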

\begin{proof} By applying \e{1.3} and \e{3.7}, this theorem can be proved in the same way as Theorem \ref{T:3.1}.
\end{proof}
 
  In \cite{Y44}, K. Yano extended concurrent vector fields to torse-forming vector fields. According to K. Yano, a vector field $v$ on a  Riemannian manifold $M$ is called a {\it torse-forming vector field} if it satisfies
\begin{align}\label{4.2} \nabla_X v=\varphi X+\alpha(X) v,\;\; \forall X\in TM,\end{align}
 for a function $\varphi$ and a 1-form $\alpha$ on $M$. The 1-form $\alpha$ is called the {\it generating form} and the function $\varphi$ is called the {\it conformal scalar} (see \cite{Mihai}). 
  A torse-forming vector field $v$ is called {\it proper torse-forming} if the 1-form $\alpha$ is nowhere zero on a dense open subset of $M$.
  
  By a cone in $\mathbb E^{m}$ with vertex at the origin we mean a ruled submanifold generated by a family of lines passing through the origin. A submanifold of $\mathbb E^{m}$ is called a {\it conic submanifold} with vertex at the origin if it is an open portion of a cone with vertex at the origin.
  
  The following two lemmas can be found in \cite{c16} (see also \cite{c17}).

\begin{lemma} \label{L:4.1} Let $\,x:(M,g)\to {\mathbb E}^{m}$ be an isometric immersion of a Riemannian
manifold into a Euclidean $m$-space ${\mathbb E}^{m}$. Then $\x=\x^T$ holds identically if and only if  $M$ is a conic submanifold with the vertex at the origin. \end{lemma}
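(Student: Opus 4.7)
The plan is to prove the two implications separately, leveraging the concurrency identity \eqref{3.3} that was already used in Section 3.

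For the forward implication, suppose $M$ is a conic submanifold with vertex at the origin. Then through each $p\in M$ there passes a generating line $\ell_p=\{tp:t\in\mathbb R\}$ (at least an open segment of it) that lies in $M$. The tangent vector to this line at $p$, regarded as a vector in $\mathbb E^m$ based at $p$, is the direction $p$ itself, i.e.\ the position vector $\x(p)$. Hence $\x(p)\in T_pM$, which gives $\x^N(p)=0$ and consequently $\x=\x^T$ on $M$.

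For the converse, assume $\x=\x^T$ on all of $M$, so that the position vector restricts to a tangent vector field on $M$. The key observation is that (3.3) then forces the integral curves of $\x^T$ inside $M$ to be rays through the origin. More precisely, let $\gamma:(-\varepsilon,\varepsilon)\to M$ be an integral curve of $\x^T$ with $\gamma(0)=p\in M$. Since $\gamma$ lies in $M$, its ambient velocity equals $\x^T(\gamma(t))=\x(\gamma(t))=\gamma(t)$ (viewed as position vector in $\mathbb E^m$), so $\gamma$ satisfies the ODE $\gamma'(t)=\gamma(t)$ in $\mathbb E^m$. By uniqueness of solutions of ODEs in Euclidean space, the unique solution with initial value $p$ is $\gamma(t)=e^t p$, which parametrises the open ray from the origin through $p$. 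Therefore this ray lies in $M$ in a neighbourhood of $p$, and since $p\in M$ was arbitrary, $M$ is swept out by open rays through the origin, i.e.\ $M$ is an open portion of a cone with vertex at the origin.

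The main subtlety I would want to nail down is the step that identifies the intrinsic flow of $\x^T$ on $M$ with the ambient ODE $\gamma'=\gamma$ on $\mathbb E^m$. Once one writes out that the ambient velocity of an intrinsic integral curve of a tangent vector field coincides, at each point, with that vector field, this identification is immediate; but it is precisely this bridge between the intrinsic and the ambient flow, combined with uniqueness of ODE solutions, that converts the pointwise tangency condition $\x=\x^T$ into the global geometric statement that $M$ is a cone.
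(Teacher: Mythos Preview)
Your argument is correct. The paper itself does not supply a proof of Lemma~\ref{L:4.1}; it merely records the statement and cites \cite{c16,c17} for a proof, so there is no in-text argument to compare against. Your method---checking that when $\x=\x^T$ the integral curves of $\x^T$ on $M$ satisfy the ambient ODE $\gamma'=\gamma$ and hence trace out radial segments through the origin, and conversely that on a conic submanifold the ruling through $p$ has tangent direction $\x(p)$---is the standard and natural route, and the self-flagged subtlety (identifying the intrinsic flow of $\x^T$ with the ambient flow) is handled correctly since for an isometric immersion the velocity $\gamma'(t)$ computed in $M$ is literally the same vector in $\mathbb E^m$. One cosmetic point: you call the direction ``$M$ conic $\Rightarrow \x=\x^T$'' the \emph{forward} implication, but as the lemma is phrased the forward (only-if) direction is ``$\x=\x^T \Rightarrow M$ conic''; swapping the labels would align your write-up with the statement.
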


\begin{lemma} \label{L:4.2} Let $\,x:(M,g)\to {\mathbb E}^{m}$ be an isometric immersion of a Riemannian manifold into ${\mathbb E}^{m}$. Then $\x=\x^N$ holds identically if and only if  $M$ lies in a hypersphere centered at the origin. \end{lemma}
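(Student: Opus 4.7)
The plan is to derive both implications from the concurrent identity $\tilde\nabla_V\x=V$ recorded in \e{3.3}, by computing how the squared norm $\tilde g(\x,\x)$ varies along $M$. The key dictionary is that $g(V,\x^T)=\tilde g(V,\x)$ for every tangent vector $V$, so vanishing of $\x^T$ is equivalent to the differential of $|\x|^2$ being zero on $M$.

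First I would handle the ``if'' direction. Assuming $M$ is contained in a hypersphere of radius $r$ centered at the origin, the function $\tilde g(\x,\x)$ restricts to the constant $r^2$ on $M$. Differentiating along any tangent vector $V$ and using \e{3.3} gives
\[
0=V\,\tilde g(\x,\x)=2\tilde g(\tilde\nabla_V\x,\x)=2\tilde g(V,\x)=2g(V,\x^T),
\]
so $\x^T$ vanishes on $M$, i.e., $\x=\x^N$.

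For the converse I would reverse this calculation. Assume $\x=\x^N$, equivalently $\x^T\equiv 0$. Then for every $V$ tangent to $M$ the same chain of equalities reads
\[
V\,\tilde g(\x,\x)=2\tilde g(\tilde\nabla_V\x,\x)=2\tilde g(V,\x)=2g(V,\x^T)=0,
\]
so $|\x|^2$ is locally constant on $M$. Passing to a connected component (or assuming $M$ connected) yields $|\x|^2\equiv r^2$ for some $r\ge 0$; the case $r=0$ is excluded since an immersion of a positive-dimensional manifold cannot be constantly zero, so $M$ lies in the hypersphere of radius $r$ centered at the origin.

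I do not expect any real obstacle here: the proof is essentially the observation that \emph{``$M$ sits in a hypersphere centered at the origin''} and \emph{``$|\x|^2$ is constant on $M$''} are the same statement, and the concurrent property \e{3.3} supplies the derivative identity relating the gradient of $|\x|^2$ on $M$ to $\x^T$. The only minor bookkeeping is to argue componentwise if $M$ is disconnected and to rule out the trivial radius-zero case.
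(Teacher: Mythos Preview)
Your argument is correct and is the standard one: the concurrent identity \e{3.3} gives $V\,\tilde g(\x,\x)=2g(V,\x^T)$, so $\x^T\equiv 0$ is equivalent to $|\x|^2$ being locally constant on $M$, which is precisely the condition that $M$ lies in a hypersphere centered at the origin. The paper itself does not supply a proof of this lemma; it merely quotes the result from \cite{c16,c17}, so there is no in-paper argument to compare against. Your bookkeeping remarks about connectedness and the degenerate $r=0$ case are appropriate caveats.
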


  In view of Lemma \ref{L:4.1} and Lemma \ref{L:4.2}, we make the following definition of {\it proper submanifolds} as in \cite{CW17}.

\begin{definition}\label{D:4.1} {\rm A Euclidean submanifold $M$ is called {\it proper} if both $\,\x^T$ and $\x^N$ are nowhere zero on some dense open subset of $M$.}
\end{definition}

\begin{theorem} \label{T:4.2} Let $M$ be a proper hypersurface  of $\mathbb E^{n+1}$.
If $(M,g,\x^T,\lambda,\mu)$ is a quasi-Yamabe soliton with $\mu\ne 0$, then $M$ is a quasi-umbilical hypersurface with $\x^T$ as its distinguished direction. Moreover, $\x^T$ is a torse-forming vector field.
\end{theorem}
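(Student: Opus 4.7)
The plan is to specialize Theorem \ref{T:4.1} to the hypersurface setting, thereby obtaining an explicit shape-operator formula of rank-one-plus-scalar type, and then combine it with equation \e{3.5} from the proof of Theorem \ref{T:3.1} to read off the torse-forming identity \e{4.2} for $\x^T$.

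First, I would use properness (Definition \ref{D:4.1}) to introduce a local unit normal $N$ on the dense open subset $U$ where $\x^N\ne 0$, write $\x^N=\rho N$ with $\rho=|\x^N|>0$, and convert \e{4.1} via \e{2.4} into the explicit identity $A_N V=\rho^{-1}(R-\lambda-1)\,V+\rho^{-1}\mu\,g(\x^T,V)\,\x^T$ for all $V$ tangent to $M$ over $U$. This is an identity multiple plus a rank-one correction directed along $\x^T$.

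From this formula the quasi-umbilical structure is immediate: every vector orthogonal to $\x^T$ is an eigenvector for the eigenvalue $(R-\lambda-1)/\rho$, of multiplicity at least $n-1$, while $\x^T$ itself spans the eigenspace of the remaining eigenvalue $(R-\lambda-1+\mu|\x^T|^2)/\rho$. Since $\mu\ne 0$ and $\x^T\ne 0$ on $U$, these two eigenvalues are distinct, so $\x^T$ is a genuine distinguished direction on $U$, and this property passes to all of $M$ by continuity.

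Finally, plugging the shape-operator formula into \e{3.5}, namely $\nabla_V\x^T=V+A_{\x^N}V=V+\rho A_N V$, will yield $\nabla_V\x^T=(R-\lambda)\,V+\mu\,g(\x^T,V)\,\x^T$, which matches the torse-forming equation \e{4.2} with conformal scalar $\varphi=R-\lambda$ and generating form $\alpha=\mu(\x^T)^\#$. The only delicate step is the eigenvalue-splitting argument in the third paragraph: without both properness (to guarantee $|\x^T|>0$ on a dense set) and $\mu\ne 0$, the rank-one correction could be trivial and the distinguished-direction claim vacuous; together these two hypotheses eliminate that obstruction, and the remainder is a mechanical substitution into \e{3.5}.
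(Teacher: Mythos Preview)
Your argument is correct and follows essentially the same route as the paper: both specialize Theorem~\ref{T:4.1} via \e{2.4} to obtain the rank-one-plus-scalar form of the shape operator (the paper's \e{4.3}--\e{4.6}), and then read off the quasi-umbilical structure with $\x^T$ as distinguished direction. The only minor difference is that for the torse-forming conclusion the paper invokes an external result (Lemma~4 of \cite{CV17}), whereas you substitute the shape-operator formula directly into \e{3.5} to obtain $\nabla_V\x^T=(R-\lambda)V+\mu\,g(\x^T,V)\,\x^T$; your computation is the content of that lemma in this setting and makes the proof slightly more self-contained.
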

\begin{proof} Under the hypothesis of the theorem, we obtain \e{4.1} from Theorem \ref{T:4.1}.
By combining \e{2.4} and \e{4.1} we find
\begin{align} \label{4.3}\ g(A_{\x^N}V,W)=(R-\lambda-1)g(V,W)+\mu g(\x^T,V)g(\x^T,W).\end{align}
It follows from \e{4.3} that
\begin{align} \label{4.4}& A_{\x^N}\x^T=(R-\lambda-1)\x^T+\mu |\x^T|^2 \x^T\\&
 \label{4.5}A_{\x^N}Z=(R-\lambda-1)Z\end{align}
for any  vector $Z\in TM$ with $g(Z,\x^T)=0$. Combining \e{4.4} and \e{4.5} yields
 \begin{align} \label{4.6}A_{\x^N}V=(\varphi-1)V+\alpha(V) \x^T,\;\; \forall V\in TM,\end{align}
with $\varphi=R-\lambda$ and $\alpha=\mu (\x^T)^\#$, where $(\x^T)^\#$ is the 1-form dual to $\x^T$.

It follows from \e{4.6} that $M$ is a quasi-umbilical hypersurface with $\x^T$ as its distinguished direction. Moreover, it follows from Lemma 4 of \cite{CV17} that $\x^T$ is a torse-forming vector field on $M$.
\end{proof}

A {\it rotational hypersurface} $M=S^{n-1}\times \gamma$ in $\mathbb E^{n+1}$ is an $O(n-1)$-invariant hyper-surface, where $S^{n-1}$ is a Euclidean sphere and 
\begin{align}\label{4.7}\gamma(u)=(g(u),u),\;\;   g(u)>0,\;\; u\in I,\end{align} is a plane curve defined on an open interval $I$,  the {\it profile curve}, and the $u$-axis is called the {\it axis of rotation}.  The rotational hypersurface $M$ can expressed as
\begin{align} \label{4.8} \x=(g(u) y_1 ,\cdots,g(u) y_n, u),\;\;  y_1^2+\cdots+y_n^2=1.\end{align}

The rotational hypersurfaces is called a {\it spherical cylinder}  if its profile curve $\gamma$ is a horizontal line segment  (i.e., $g=constant\ne 0$). And it is called a {\it spherical cone} if $\gamma$ is a non-horizontal line segment  (i.e., $g=cu$, $0\ne c\in {\bf R}$).
We only consider rotational hypersurfaces which contain no open portions of hyperspheres, spherical cylinders, or  spherical cones.

The next result follows immediately from Theorem \ref{T:4.2} and Theorem 6 of \cite{CV17}.
\begin{corollary} \label{C:4.1}  Let $M$ be a proper hypersurface  of $\mathbb E^{n+1}$. If $(M,g,\x^T,\lambda,\mu)$ is a quasi-Yamabe soliton with $\mu\ne 0$, then  $M$ is an open portion of a rotational hypersurface whose axis of rotation contains the origin.
\end{corollary}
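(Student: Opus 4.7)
The plan is simply to chain Theorem \ref{T:4.2} with Theorem 6 of \cite{CV17}, since the hypothesis of the corollary is tailored to feed directly into the first, whose output is in turn tailored to feed directly into the second.

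First I would invoke Theorem \ref{T:4.2}: under the stated hypothesis that $M$ is a proper hypersurface of $\mathbb E^{n+1}$ and $(M,g,\x^T,\lambda,\mu)$ is a quasi-Yamabe soliton with $\mu\neq 0$, that theorem produces the quasi-umbilical form \eqref{4.6} of the shape operator $A_{\x^N}$ with $\x^T$ as its distinguished direction, and shows that $\x^T$ is a torse-forming vector field on $M$ with generating 1-form $\alpha=\mu(\x^T)^\#$ and conformal scalar $\varphi=R-\lambda$. I would then verify that this torse-forming field is \emph{proper} in the sense of Section 4: since $\mu\neq 0$ by hypothesis and $\x^T$ is nonzero on a dense open subset by Definition \ref{D:4.1}, the generating 1-form $\alpha=\mu(\x^T)^\#$ is nowhere zero on a dense open subset of $M$.

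Next I would apply Theorem 6 of \cite{CV17}, which classifies proper Euclidean hypersurfaces on which $\x^T$ is a proper torse-forming vector field: apart from hyperspheres, spherical cylinders, and spherical cones --- precisely the three degenerate classes already excluded in the rotational-hypersurface setup above --- every such hypersurface is an open portion of a rotational hypersurface whose axis of rotation contains the origin. The axis is forced to pass through the origin because the generating 1-form $\alpha$ is a scalar multiple of the dual of $\x^T$ itself, rigidly tying the symmetry direction to the radial direction emanating from $o$. This yields exactly the conclusion of the corollary.

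No genuine obstacle is expected here, since the two cited results carry all of the geometric content. The only small point worth checking explicitly is the properness of the torse-forming field, as sketched above, which is what licenses the direct invocation of Theorem 6 of \cite{CV17}; the remainder of the argument is purely a concatenation of citations.
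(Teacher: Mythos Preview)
Your proposal is correct and matches the paper's own proof, which simply states that the result follows immediately from Theorem \ref{T:4.2} and Theorem 6 of \cite{CV17}. Your additional care in verifying that the torse-forming field $\x^T$ is \emph{proper} (via $\mu\ne 0$ and Definition \ref{D:4.1}) is a detail the paper leaves implicit, but it is exactly the right check to justify invoking the cited classification.
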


\end{document}